\theoremstyle{plain}
\newtheorem{theorem}{theorem}[section]
\newtheorem{lemma}[theorem]{Lemma}
\newtheorem{conjecture}[theorem]{Conjecture}
\newtheorem{remark}[theorem]{Remark}
\title{Note on the diversity of intersecting families}
\date{}
\author{Xiaomei Chen\and Peng Jin\\{\footnote{\today}}
}
\begin{document}
\maketitle







\vspace{-15mm} 
\begin{abstract}
Let $n=2k+1$ be odd with $k\geq3$. In this note, we give two intersecting families with diversity larger than $\sum_{i=k+1}^{2k} \binom {2k}{i}$, which disprove a conjecture of Huang.
\end{abstract}




\section{Introduction}
Throughout the paper, we denote $[n]:=\{1,2,\ldots,n\}$, and $2^{[n]}:=\{X:X\subset [n]\}$. A family $\mathcal{F}\subset 2^{[n]}$ is called intersecting if any two of its elements intersect. The degree $\delta(x)$ of $x\in [n]$ is defined to be the number of members of $\mathcal{F}$ that contain $x$, and $\mathcal{F}$ is called regular if all of its elements have the same degree. If $\mathcal{F}$ is regular, we also denote $\sigma(\mathcal{F})$ the degree of elements of $\mathcal{F}$. Given an intersecting family $\mathcal{F}$, the diversity of $\mathcal{F}$ is defined to be $div(\mathcal{F}):=|\mathcal{F}|-\mathrm{max}_{x\in{[n]}}|\mathcal{F}(x)|$, where $\mathcal{F}(x)=\{F:x\in F\in \mathcal{F}\}$. Let $n=2k+1$ and $\mathcal{Q}_{k}={\{A: A\subset [2k+1],|A|\geq k+1\}}$. About the maximal diversity among families included in $2^{[n]}$, Huang\cite{[HHuang]} gave the following conjecture.
\begin{conjecture}
\label{conj:11}
For $n=2k+1$, suppose $\mathcal{F}\subset 2^{[n]}$ is intersecting. Then
\begin{equation*}
div(\mathcal{F})\leq div(\mathcal{Q}_{k})=\sum_{i=k+1}^{2k} \binom {2k}{i}
\end{equation*}
\end{conjecture}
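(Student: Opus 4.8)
The plan is to disprove Conjecture~\ref{conj:11} by perturbing the extremal family $\mathcal{Q}_k$ itself rather than trying to beat the stated bound head‑on. The two features of $\mathcal{Q}_k$ that I would record first are that it is regular (being $S_{2k+1}$‑invariant), with $|\mathcal{Q}_k|=2^{2k}$ and common degree $d_0=\tfrac12\bigl(2^{2k}+\binom{2k}{k}\bigr)$, so that $div(\mathcal{Q}_k)=2^{2k}-d_0=\sum_{i=k+1}^{2k}\binom{2k}{i}$. The governing idea is that $div$ penalises concentration, so one should try to lower the maximum degree without shrinking the family, by trading some $(k+1)$‑sets for $k$‑sets. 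Concretely, I would fix an intersecting family $\mathcal{A}\subseteq\binom{[2k+1]}{k}$ and set
\[
\mathcal{F}=\mathcal{A}\cup\{B:|B|=k+1,\ [2k+1]\setminus B\notin\mathcal{A}\}\cup\{B:|B|\geq k+2\}.
\]
Since any two members of $\mathcal{A}$ meet, a $k$‑set and a $(k+1)$‑set are disjoint only when complementary (exactly the pairs deleted), and all larger sets meet everything, so $\mathcal{F}$ is intersecting. As complementation is a bijection between $k$‑sets and $(k+1)$‑sets, each inserted $A\in\mathcal{A}$ is offset by deleting its complement, whence $|\mathcal{F}|=2^{2k}=|\mathcal{Q}_k|$ for \emph{every} choice of $\mathcal{A}$.

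Next I would compute degrees. Relative to $\mathcal{Q}_k$, inserting $A$ raises the degree of each $x\in A$ by one while deleting $[2k+1]\setminus A$ lowers the degree of each $x\notin A$ by one, giving
\[
|\mathcal{F}(x)|=d_0-|\mathcal{A}|+2\,d_{\mathcal{A}}(x),\qquad d_{\mathcal{A}}(x):=|\{A\in\mathcal{A}:x\in A\}|.
\]
Taking maxima yields the key identity
\[
div(\mathcal{F})=2^{2k}-\max_{x}|\mathcal{F}(x)|=\sum_{i=k+1}^{2k}\binom{2k}{i}+\Bigl(|\mathcal{A}|-2\max_{x}d_{\mathcal{A}}(x)\Bigr),
\]
so the conjecture fails the moment I exhibit an intersecting $\mathcal{A}$ with $\max_x d_{\mathcal{A}}(x)<\tfrac12|\mathcal{A}|$. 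Any \emph{regular} intersecting family of $k$‑sets qualifies automatically, since regularity forces $\max_x d_{\mathcal{A}}(x)=\tfrac{k}{2k+1}|\mathcal{A}|<\tfrac12|\mathcal{A}|$.

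It then remains only to build such an $\mathcal{A}$. Identifying $[2k+1]$ with $\mathbb{Z}_{2k+1}$, I would take the cyclic orbit $\mathcal{A}=\{A+i:i\in\mathbb{Z}_{2k+1}\}$ of the single $k$‑set $A=\{0,1,\dots,k-2,k\}$; this orbit is intersecting precisely because its difference set satisfies $A-A=\mathbb{Z}_{2k+1}$, which one checks directly for $k\geq3$. The orbit has $2k+1$ members and is regular of degree $k$, so the identity above gives $div(\mathcal{F})=\sum_{i=k+1}^{2k}\binom{2k}{i}+1$, a genuine counterexample for all $k\ge 3$. (For $k=3$ one may instead take the Fano plane, $7$ triples with each point in $3$ of them, giving the same $+1$.) I would then present a second, roomier regular intersecting $\mathcal{A}$ to obtain the larger of the two promised violations, the gain being exactly $|\mathcal{A}|/(2k+1)$.

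The main obstacle is entirely in the choice of $\mathcal{A}$: the inequality $\max_x d_{\mathcal{A}}(x)<\tfrac12|\mathcal{A}|$ is delicate because it is barely met (degree $k$ against threshold $k+\tfrac12$), so $\mathcal{A}$ must be spread evenly over all $2k+1$ points. Natural but lopsided candidates fail — for example $\binom{[2k-1]}{k}$ (all $k$‑sets avoiding two fixed points) or $\{A:|A\cap\{1,2,3\}|\geq2\}$ are intersecting yet too concentrated, with $\max_x d_{\mathcal{A}}(x)>\tfrac12|\mathcal{A}|$. Hence the real content is a clean existence proof of a regular (vertex‑transitive) intersecting $k$‑uniform family on $2k+1$ points, for which the cyclic difference‑set construction is the crux; the two computations I expect to need care are verifying $A-A=\mathbb{Z}_{2k+1}$ and confirming that the deleted complementary pairs are exactly the disjoint ones, so that $\mathcal{F}$ is intersecting.
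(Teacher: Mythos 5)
Your proposal is correct and follows essentially the same route as the paper's first counterexample (Theorem 2.1): both modify $\mathcal{Q}_k$ by inserting a cyclic-orbit regular intersecting family $\mathcal{A}\subset\binom{[2k+1]}{k}$ and deleting the complements $\bar{\mathcal{A}}$, both verify the intersecting property by noting that a $k$-set and a $(k+1)$-set in $[2k+1]$ are disjoint only if complementary, and both obtain $div(\mathcal{F})=div(\mathcal{Q}_k)+1$ from regularity. The only differences are cosmetic: you build the orbit from $\{0,1,\ldots,k-2,k\}$ and check the intersecting property via $A-A=\mathbb{Z}_{2k+1}$, whereas the paper uses $\{1,2,4\}\cup\{7,9,\ldots,2k-1\}$ with a direct case check; moreover your general identity $div(\mathcal{F})=div(\mathcal{Q}_k)+|\mathcal{A}|-2\max_x d_{\mathcal{A}}(x)$ is exactly the mechanism the paper reuses for its second, larger counterexample.
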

In this note, we will give two regular intersecting families with diversity larger than $\mathcal{Q}_{k}$, which disprove the above conjecture.

\section{Counterexamples}
The first example is constructed from a regular intersecting family in $\binom {[n]}{k}$. The circular shift operation $\sigma$ on $[n]$ is defined as
$$\sigma(i)\equiv i+1  \textrm{ mod } n, \textrm{ for all entries } i=1,2,\ldots,n,$$
and we denote $\sigma(A)=\{\sigma(i):i\in A\}$ for $A\subset [n]$.

Let $L_{3}=\{1,2,4\}$ and $L_{k}=\{1,2,4\}\cup \{7,9,\ldots,2k-1\}$ for $k\geq 4$. For $k\geq 3$, we define $\mathcal{F}_{k}$ to be the family obtained by repeatedly applying $\sigma$ to $L_{k}$, i.e.
$$\mathcal{F}_{k}=\{\sigma^{i}(L_{k}):i=0,1,\ldots,n-1\}$$
For example, $\mathcal{F}_{3}$ consists of the 7 lines of the Fano plane.

\begin{lemma}
\label{lemma:21}
For $k\geq 3$ and $n=2k+1$, $\mathcal{F}_{k}\subset 2^{[n]}$ is a regular intersecting family with degree equal to $k$.
\end{lemma}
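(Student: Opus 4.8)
\emph{Plan.} I would split the lemma into three claims: that the $n$ shifts $\sigma^{i}(L_{k})$ are pairwise distinct, that $\mathcal{F}_{k}$ is regular, and that it is intersecting. The first two are tightly linked. Since $\mathcal{F}_{k}$ is by construction the orbit of $L_{k}$ under the cyclic group $\langle\sigma\rangle$, the shift $\sigma$ maps $\mathcal{F}_{k}$ bijectively onto itself, so $F\mapsto\sigma(F)$ is a bijection between the members containing $x$ and those containing $\sigma(x)$; hence $\delta(x)=\delta(\sigma(x))$ for every $x$, and since $\sigma$ acts transitively on $[n]$ the family is automatically regular. Once I also know $|\mathcal{F}_{k}|=n$, double counting incidences via $n\,\delta=|\mathcal{F}_{k}|\cdot|L_{k}|=n\cdot k$ (using $|L_{k}|=k$) forces the common degree to be $\delta=k$.

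\emph{Distinctness.} To obtain $|\mathcal{F}_{k}|=n$ I would show the orbit is free. If $\sigma^{i}(L_{k})=\sigma^{j}(L_{k})$ with $0\le i<j<n$, then $L_{k}$ is invariant under $\sigma^{d}$ for $d=j-i\neq0$, so $L_{k}$ is a union of cosets of the subgroup $\langle d\rangle\le\mathbb{Z}/n$. Its order $m=n/\gcd(n,d)$ is greater than $1$ and divides $|L_{k}|=k$; but $m\mid n=2k+1$ as well, so $m\mid\gcd(k,2k+1)=1$, a contradiction. Thus all $n$ shifts are distinct.

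\emph{Intersecting.} Here I would translate the intersecting condition into a statement about the difference set. Two members $\sigma^{i}(L_{k})$ and $\sigma^{j}(L_{k})$ meet iff $L_{k}\cap(L_{k}+t)\neq\emptyset$ with $t\equiv j-i$, i.e.\ iff $t$ lies in $D:=\{a-b\bmod n:a,b\in L_{k}\}$; so $\mathcal{F}_{k}$ is intersecting iff $D=\mathbb{Z}/n$. Since $n$ is odd I would partition the nonzero residues into the $k$ antipodal pairs $\{t,n-t\}$, each containing exactly one odd and one even number, and since $D$ is symmetric it suffices to hit one element of every pair. A direct computation of the positive integer differences of $L_{k}$ shows they contain every odd number in $\{1,\dots,2k-3\}$ (arising from $\{2-1,4-1\}$ together with $(2j-1)-2$ and $(2j-1)-4$ for $j=4,\dots,k$) as well as the difference $2=4-2$. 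The only odd residue in $\{1,\dots,2k\}$ exceeding $2k-3$ is $2k-1$, whose antipode is $2k+1-(2k-1)=2\in D$; every other pair is hit by its odd element. Hence $D=\mathbb{Z}/n$ and $\mathcal{F}_{k}$ is intersecting.

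\emph{Main obstacle.} I expect the difference-set computation to be the crux. The naive hope that $\{1,\dots,k\}\subseteq D$ already fails — for instance $4$ is not a difference of $L_{4}$ or $L_{5}$ — so the parity/antipodal-pairing argument, which needs only the odd differences plus the single even difference $2$, is what makes the argument go through. I would also treat directly the small case $k=3$, where the odd block $\{7,\dots,2k-1\}$ is empty and the cross-differences above are vacuous; this is just the Fano plane, with $\{1,2,4\}$ a perfect difference set modulo $7$.
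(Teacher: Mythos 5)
Your proof is correct, but its core step runs along a genuinely different line from the paper's. For the intersecting property, the paper argues positionally: if $L_k\cap\sigma^{i}(L_k)=\emptyset$, then the consecutive pair $\sigma^{i}(1),\sigma^{i}(2)$ must land on one of the only two cyclically consecutive pairs in the complement of $L_k$, forcing $\sigma^{i}(1)=5$ or $\sigma^{i}(1)=2k$; in the first case $\sigma^{i}(2k-1)=2\in L_k$ and in the second $\sigma^{i}(4)=2\in L_k$, a contradiction. You instead recast intersection as coverage by the symmetric difference set $D$ and use the antipodal parity pairing: the odd differences $1,3,\ldots,2k-3$ (from $2-1$, $4-1$, and $(2j-1)-2$) together with the single even difference $2=4-2$ hit every pair $\{t,n-t\}$, and symmetry of $D$ finishes it. Both arguments are elementary and sound; yours trades the paper's very short case analysis (which quietly asks the reader to verify the structure of the complement of $L_k$) for a slightly longer but fully self-contained computation. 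Your write-up also makes explicit two points the paper leaves implicit: that the $n$ shifts of $L_k$ are pairwise distinct (via the stabilizer argument using $\gcd(k,2k+1)=1$), which is exactly what justifies $|\mathcal{F}_k|=n$ in the paper's degree formula $|\mathcal{F}_k|\cdot k/n=k$, and that cyclic symmetry plus double counting forces regularity with degree $k$. Both treatments dispose of $k=3$ by identifying $\mathcal{F}_3$ with the Fano plane.
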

\begin{proof}
$\mathcal{F}_{3}$ consists of the 7 lines of the Fano plane, and thus is a regular intersecting family with degree 3. For $k>3$, it is clear that $\mathcal{F}_{k}$ is regular with degree $|\mathcal{F}_{k}|*k/n=k$. To prove that $\mathcal{F}_{k}$ is intersecting, we just need to show that $L_k$ and $\sigma^{i}(L_k)(1\leq i\leq n-1)$ intersect, since $\sigma^{n-i}(\sigma^{i}(L_k))=L_k$ and $\sigma^{n-i}(\sigma^{j}(L_k))=\sigma^{j-i}(L_k)$ for $1\leq i<j\leq n-1$. Assume that $L_k$ and $\sigma^{i}(L_k)(1\leq i\leq n-1)$ do not intersect for some $i\in [n-1]$, then we must have $\sigma^{i}(1)=5$ or $\sigma^{i}(1)=2k$. In the first case, we obtain $\sigma^{i}(2k-1)=2$, and $\sigma^{i}(4)=2$ in the second one. Both of the two cases contradict the assumption, thus $\mathcal{F}_{k}$ is intersecting.
\end{proof}

Given a set $A\subset [n]$, We denote $\bar{A}$ the complement of $A$ in $[n]$, and denote $\bar{\mathcal{F}}:=\{\bar{A}: A\in \mathcal{F}\}$ for $\mathcal{F}\subset 2^{[n]}$. The family $\mathcal{P}_k$ is defined as
$$\mathcal{P}_k=\mathcal{F}_k\cup (\mathcal{Q}_k\setminus \bar{\mathcal{F}_k})$$

\begin{theorem}
\label{thm:21}
For $n=2k+1$ with $k\geq 3$, $\mathcal{P}_k$ is a regular intersecting family and $div(\mathcal{P}_k)=div(\mathcal{Q}_k)+1$.
\end{theorem}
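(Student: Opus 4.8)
The plan is to exploit the regularity of $\mathcal{P}_k$ so that its diversity reduces to a simple count: for any regular intersecting family $\mathcal{G}$ one has $\max_{x}|\mathcal{G}(x)|=\sigma(\mathcal{G})$, whence $div(\mathcal{G})=|\mathcal{G}|-\sigma(\mathcal{G})$. So once regularity is in hand, the statement follows by comparing only the cardinality and the common degree of $\mathcal{P}_k$ with those of $\mathcal{Q}_k$. The structural observation driving everything is that each member of $\mathcal{F}_k$ has size exactly $k$ (i.e.\ $|L_k|=k$ for all $k\geq 3$), while each member of $\bar{\mathcal{F}_k}$ has size $k+1$ and hence lies in $\mathcal{Q}_k$. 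Thus $\mathcal{P}_k$ is obtained from $\mathcal{Q}_k$ by deleting the $2k+1$ complements in $\bar{\mathcal{F}_k}$ and inserting the $2k+1$ sets of $\mathcal{F}_k$; and since $\mathcal{F}_k$ and $\mathcal{Q}_k\setminus\bar{\mathcal{F}_k}$ consist of sets of different sizes, the defining union is disjoint.

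First I would verify that $\mathcal{P}_k$ is intersecting. Two members of $\mathcal{Q}_k\setminus\bar{\mathcal{F}_k}$ intersect because any two sets of size $\geq k+1$ in $[2k+1]$ do, and two members of $\mathcal{F}_k$ intersect by Lemma~\ref{lemma:21}. The only case needing the deletion is a pair $A\in\mathcal{F}_k$, $B\in\mathcal{Q}_k\setminus\bar{\mathcal{F}_k}$: if $A\cap B=\emptyset$ then $B\subseteq\bar{A}$, and since $|\bar{A}|=k+1\leq|B|$ we must have $B=\bar{A}\in\bar{\mathcal{F}_k}$, contradicting $B\notin\bar{\mathcal{F}_k}$. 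This is precisely the reason for removing $\bar{\mathcal{F}_k}$, and I expect it to be the crux of the proof.

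Next I would establish regularity by showing each constituent is regular and then adding degrees. The family $\mathcal{Q}_k$ is regular by the symmetry of $[n]$, with degree $\sigma(\mathcal{Q}_k)=\sum_{j=k}^{2k}\binom{2k}{j}$; the family $\mathcal{F}_k$ is regular with degree $k$ by Lemma~\ref{lemma:21}; and $\bar{\mathcal{F}_k}$ is regular with degree $|\mathcal{F}_k|-k=(2k+1)-k=k+1$, since $x\in\bar{A}$ iff $x\notin A$. As the union defining $\mathcal{P}_k$ is disjoint, the degree of any $x$ equals $\sigma(\mathcal{P}_k)=k+\bigl(\sigma(\mathcal{Q}_k)-(k+1)\bigr)=\sigma(\mathcal{Q}_k)-1$, independent of $x$, so $\mathcal{P}_k$ is regular.

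Finally the diversity follows by bookkeeping, and notably without needing the closed form of $\sigma(\mathcal{Q}_k)$. Since $|\bar{\mathcal{F}_k}|=|\mathcal{F}_k|=2k+1$, the cardinalities cancel and $|\mathcal{P}_k|=|\mathcal{Q}_k|$. Using regularity of both families, $div(\mathcal{P}_k)=|\mathcal{P}_k|-\sigma(\mathcal{P}_k)=|\mathcal{Q}_k|-\bigl(\sigma(\mathcal{Q}_k)-1\bigr)=\bigl(|\mathcal{Q}_k|-\sigma(\mathcal{Q}_k)\bigr)+1=div(\mathcal{Q}_k)+1$, as claimed. Intuitively, each $(k+1)$-set of $\bar{\mathcal{F}_k}$ is traded for a $k$-set of $\mathcal{F}_k$: the total count is preserved while every vertex loses exactly one unit of degree, which raises the diversity by one. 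The substantive step is the intersecting verification; the regularity and the final computation are routine once the size bookkeeping ($|L_k|=k$, $|\bar{A}|=k+1$) is set up.
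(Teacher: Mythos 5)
Your proposal is correct and follows essentially the same route as the paper: the same complement argument ($F\cap Q=\emptyset$ with $|Q|\geq|\bar F|=k+1$ forces $Q=\bar F$) for the intersecting property, and the same degree bookkeeping ($\sigma(\mathcal{P}_k)=\sigma(\mathcal{F}_k)+\sigma(\mathcal{Q}_k)-\sigma(\bar{\mathcal{F}_k})=\sigma(\mathcal{Q}_k)-1$ with $|\mathcal{P}_k|=|\mathcal{Q}_k|$) for the diversity. Your write-up is slightly more explicit than the paper's (noting $|L_k|=k$, the disjointness of the union, and that the closed binomial forms are unnecessary), but these are presentational refinements, not a different argument.
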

\begin{proof}
By Lemma \ref{lemma:21}, the family $\mathcal{F}_k$ is intersecting. If $\mathcal{P}_k$ is not intersecting, then there exist disjoint sets $F$ and $Q$ with $F\in \mathcal{F}_k$ and $Q\in \mathcal{Q}_k\setminus \bar{\mathcal{F}_k}$. Since $F\cap Q=\emptyset$ and $|Q|\geq |\bar{F}|=k+1$, we must have $Q=\bar{F}$, which contradicts the choice of $F$ and $Q$. Therefore $\mathcal{P}_k$ is intersecting.

Since $\mathcal{F}_k$, $\bar{\mathcal{F}_k}$ and $\mathcal{Q}_k$ are regular with degree equal to $k$, $k+1$ and $\sum_{i=k}^{2k}\binom{2k}{i}$ respectively, and $|\mathcal{P}_k|=|\mathcal{Q}_k|=\sum_{i=k+1}^{2k+1}\binom{2k+1}{i}$, we have
$$div(\mathcal{P}_k)=|\mathcal{P}_k|-\sigma(\mathcal{F}_k)-(\sigma(\mathcal{Q}_k)-\sigma(\bar{\mathcal{F}_k}))=\sum_{i=k+1}^{2k}\binom{2k}{i}+1=div(\mathcal{Q}_k)+1$$
\end{proof}

The second example is related to finite projective planes. Let $q$ be an odd prime power, and let $\mathds{P}$ be the transitive projective plane $\mathds{P}^2(\mathds{F}_q)$ over the finite field $\mathds{F}_q$. We take $n=q^2+q+1$, and identify the set of points of $\mathds{P}$ with $[n]$. For $q+1 \leq i\leq \frac{q^2+q}{2}$, we define $\mathcal{A}_i$ to be the family of all $i-$element subsets consisting of the points of $\mathds{P}$ that contain a line of $\mathds{P}$.
Since $\mathds{P}$ is transitive, $\mathcal{A}_i$ is a regular intersecting family for $i\geq q+1$. See \cite{[Ellis1]} for more details about the family $\mathcal{A}_i$.

Let $k=\frac{q^2+q}{2}$ and $\mathcal{A}=\bigcup_{i=q+1}^{k}\mathcal{A}_i$. We define the family $R_k$ as
$$\mathcal{R}_k=\mathcal{A}\cup (\mathcal{Q}_k\setminus \mathcal{\bar{A}}).$$
Then the following result will show that $\mathcal{R}_k$ is a family in $2^{[n]}$ with larger diversity than $\mathcal{Q}_k$.
\begin{theorem}
\label{thm:22}
For an odd prime power $q$, let $n=q^2+q+1$ and $k=\frac{q^2+q}{2}$. Then $\mathcal{R}_k$ is a regular intersecting family, and we have
$$div(\mathcal{R}_k)-div(\mathcal{Q}_k)>\sum_{i=q+1}^{k}\frac{n-2i}{2}\binom{n-q-1}{i-q-1}$$
\end{theorem}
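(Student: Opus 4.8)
The plan is to mirror the proof of Theorem~\ref{thm:21}: first show $\mathcal{R}_k$ is intersecting, then that it is regular, next reduce the diversity gap to a difference of degrees, and finally estimate that difference by a counting argument. For the intersecting property, note that $\mathcal{A}$ is intersecting because every member contains a line of $\mathds{P}$ and any two lines of $\mathds{P}$ meet, while $\mathcal{Q}_k$ is intersecting since its members have size $\geq k+1$ and $2(k+1)>n$. It remains to handle cross pairs. Suppose $F\in\mathcal{A}$ and $Q\in\mathcal{Q}_k\setminus\bar{\mathcal{A}}$ are disjoint. Then $F\subseteq\bar{Q}$, and since $|F|\geq q+1$ while $|\bar{Q}|=n-|Q|\leq k$, we get $q+1\leq|\bar{Q}|\leq k$; moreover $\bar{Q}$ contains the line lying inside $F$, so $\bar{Q}\in\mathcal{A}_{|\bar{Q}|}\subseteq\mathcal{A}$, i.e.\ $Q\in\bar{\mathcal{A}}$, a contradiction. (Here I use that members of $\bar{\mathcal{A}}$ have size between $k+1$ and $n-q-1$, so $\bar{\mathcal{A}}\subseteq\mathcal{Q}_k$, and that $\mathcal{A}$ and $\mathcal{Q}_k\setminus\bar{\mathcal{A}}$ are disjoint by their sizes.) Hence $\mathcal{R}_k$ is intersecting.

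For regularity and diversity, I would write $\mathcal{R}_k=\mathcal{A}\sqcup(\mathcal{Q}_k\setminus\bar{\mathcal{A}})$ with $\bar{\mathcal{A}}\subseteq\mathcal{Q}_k$, so for every point $x$ one has $|\mathcal{R}_k(x)|=|\mathcal{A}(x)|+|\mathcal{Q}_k(x)|-|\bar{\mathcal{A}}(x)|$. Each of $\mathcal{A}$, $\mathcal{Q}_k$, $\bar{\mathcal{A}}$ is regular (the families $\mathcal{A}_i$ are regular by transitivity of $\mathds{P}$, hence so are $\mathcal{A}$ and $\bar{\mathcal{A}}$), so $\mathcal{R}_k$ is regular with $\sigma(\mathcal{R}_k)=\sigma(\mathcal{A})+\sigma(\mathcal{Q}_k)-\sigma(\bar{\mathcal{A}})$, and $|\mathcal{R}_k|=|\mathcal{Q}_k|$. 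Exactly as in Theorem~\ref{thm:21}, this gives $div(\mathcal{R}_k)-div(\mathcal{Q}_k)=\sigma(\bar{\mathcal{A}})-\sigma(\mathcal{A})$. Double counting yields $\sigma(\mathcal{A})=\tfrac1n\sum_{i=q+1}^{k} i\,|\mathcal{A}_i|$ and $\sigma(\bar{\mathcal{A}})=\tfrac1n\sum_{i=q+1}^{k}(n-i)\,|\mathcal{A}_i|$, so that
\[
div(\mathcal{R}_k)-div(\mathcal{Q}_k)=\frac1n\sum_{i=q+1}^{k}(n-2i)\,|\mathcal{A}_i|,
\]
where every coefficient $n-2i>0$ because $i\leq k=(n-1)/2$.

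The heart of the argument, and the step I expect to be the \emph{main obstacle}, is to show term by term that $|\mathcal{A}_i|>\tfrac n2\binom{n-q-1}{i-q-1}$; multiplying by $(n-2i)/n>0$ and summing then gives the stated bound. I would count $i$-sets containing a line by inclusion--exclusion over the $n$ lines $\ell$, writing $U_\ell$ for the set of $i$-element subsets containing $\ell$. Then $|U_\ell|=\binom{n-q-1}{i-q-1}$, and for distinct lines $|\ell\cup\ell'|=2q+1$ gives $|U_\ell\cap U_{\ell'}|=\binom{n-2q-1}{i-2q-1}$, so Bonferroni's inequality yields $|\mathcal{A}_i|\geq n\binom{n-q-1}{i-q-1}-\binom n2\binom{n-2q-1}{i-2q-1}$. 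Thus it suffices to prove $\binom{n-q-1}{i-q-1}>(n-1)\binom{n-2q-1}{i-2q-1}$.

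This last inequality is where the care lies. It is trivial when $i<2q+1$, since the right-hand side is then $0$; in particular for $q=3$ it always holds because $k<2q+1$. For $q\geq5$ and $i\geq2q+1$ I would expand the ratio as $\prod_{j=0}^{q-1}\frac{n-q-1-j}{i-q-1-j}$. Each factor is increasing in $j$, so is at least the $j=0$ factor $\frac{n-q-1}{i-q-1}\geq\frac{2(n-q-1)}{n-2q-3}>2$, using $i\leq(n-1)/2$; hence the product exceeds $2^q$. Since $2^q>q^2+q=n-1$ for $q\geq5$, the desired inequality follows. The delicate point is precisely this comparison: the first correction term of inclusion--exclusion must be dominated uniformly across the whole range $2q+1\leq i\leq k$, and it is the monotonicity of the factors together with $2^q>n-1$ that makes the crude two-term Bonferroni truncation sufficient.
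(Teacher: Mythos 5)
Your proposal is correct and follows essentially the same route as the paper's proof: the same complement/upset argument for the intersecting property, the same regularity bookkeeping reducing the diversity gap to $\sigma(\bar{\mathcal{A}})-\sigma(\mathcal{A})=\frac{1}{n}\sum_{i=q+1}^{k}(n-2i)|\mathcal{A}_i|$, and the same two-term Bonferroni estimate for $|\mathcal{A}_i|$. The only difference is that you explicitly verify the second inequality in (\ref{equ:21}), namely $\binom{n-q-1}{i-q-1}\geq(n-1)\binom{n-2q-1}{i-2q-1}$, which the paper asserts without justification; your factor-by-factor argument (each of the $q$ ratio factors exceeds $2$ since $i\leq(n-1)/2$, and $2^q>n-1$ for $q\geq 5$, the case $q=3$ being vacuous as $k<2q+1$) correctly fills in that omitted step.
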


\begin{proof}
If $\mathcal{R}_k$ is not intersecting, then there exist disjoint sets $A$ and $Q$ with $A\in \mathcal{A}$ and $Q\in \mathcal{Q}_k\setminus \mathcal{\bar{A}}$. Since $A\cap Q=\emptyset$ and $|Q|\leq k$, we have $A\subset \bar{Q}\in 2^{[n]}\setminus \mathcal{Q}_k$. On the other hand, we know that $\mathcal{R}_k$ is an upset by its definition , i.e. is closed under taking supersets. Thus we must have $\bar{Q}\in \mathcal{A}$, which contradicts the choice of $Q$. Therefore $\mathcal{R}_k$ is an intersecting family.

It is clear that $\mathcal{A}_i$ and $\bar{\mathcal{A}_i}$ are regular, so $\mathcal{R}_k$ is also regular. To obtain the diversity of $\mathcal{R}_k$, we firstly compute the size of $\mathcal{A}_i$. Since $\mathcal{A}_{q+1}$ consists of all lines of the projective plane, we have $|\mathcal{A}_{q+1}|=n$. For $q+1<i\leq k$, by using the Bonferroni inequalities, we have
\begin{align}
\label{equ:21}
\mathcal{A}_i\geq n\binom {n-q-1}{i-q-1}-\binom{n}{2}\binom{n-2q-1}{i-2q-1}\geq \frac{n}{2}\binom{n-q-1}{i-q-1}
\end{align}

Since $|\mathcal{R}_k|=|\mathcal{Q}_k|, \sigma(\mathcal{A}_i)+\sigma(\bar{\mathcal{A}}_i)=|\mathcal{A}_i|$, we have
\begin{align}
\label{equ:22}
div(\mathcal{R}_k)&=|\mathcal{R}_k|-\sigma(\mathcal{A})-(\sigma(\mathcal{Q}_k)-\sigma(\bar{\mathcal{A}}))\nonumber\\
&=div(\mathcal{Q}_k)+\sum_{i=q+1}^{k}(|\mathcal{A}_i|-2\sigma(\mathcal{A}_i)),
\end{align}
Combining (\ref{equ:21}) and (\ref{equ:22}) together, we have
\begin{align*}
div(\mathcal{R}_k)-div(\mathcal{Q}_k)&=\sum_{i=q+1}^{k}(|\mathcal{A}_i|(1-\frac{2i}{n}))\\
&>\sum_{i=q+1}^{k}\frac{n-2i}{2}\binom{n-q-1}{i-q-1}.
\end{align*}
\end{proof}

\begin{remark}
\rm{By the construction of the above example, to get the maximum diversity among families included in $2^{[n]}$, it will be meaningful to study the existence of regular intersecting families included in $\binom{[n]}{k}$ and the maximum size of such families when $k$ is relatively small compared to $[n]$. Readers could refer to \cite{[Ihringer]} for more information about regular intersecting families.}
\end{remark}





\noindent{\emph{Address}: School of Mathematics and Computational Science, Hunan University of Science and Technology, Xiangtan 411201, China.}\\
\noindent{\emph{E-mail address}: xmchen@hnust.edu.cn, jin\_peng10@163.com}
\end{document}